\theoremstyle{plain}
\newtheorem{thm}{Theorem}[section]
\newtheorem{cor}[thm]{Corollary}
\newtheorem{prop}[thm]{Proposition}
\newtheorem{defn}[thm]{Definition}
\newtheorem{exa}[thm]{Example}
\newtheorem{rem}[thm]{Remark}
\begin{document}

\title{Graded Almost Prime Ideals over Non-Commutative Graded Rings}

\author{Jenan \textsc{Shtayat}}
\author{Rashid \textsc{Abu-Dawwas}}
\author{Ghadeer \textsc{Bani Issa}}
\address{Department of Mathematics, Yarmouk University, Irbid, Jordan}
\email{jenan@yu.edu.jo}
\email{rrashid@yu.edu.jo}
\email{gadeerbanyissa5@gmail.com}

\subjclass[2010]{Primary 13A02; Secondary 16W50}

\keywords{Graded prime ideal; graded almost prime ideal; graded weakly prime ideal.}

\begin{abstract}
The purpose of this article is to define and examine graded almost prime ideals over a non-commutative graded ring, and consider some cases where all graded right ideals of a non-commutative graded ring are graded almost prime.
\end{abstract}

\maketitle

\section{Introduction}

Let $G$ be a group. Then a ring $R$ is called a $G$-graded ring if $R=\displaystyle\bigoplus_{g\in G} R_{g}$ with the property $R_{g}R_{h}\subseteq R_{gh}$ for all $g,h \in G$, where $R_g$ is an additive subgroup of $R$ for all $g\in G$. The elements of $R_{g}$ are called homogeneous of degree $g$. If $x\in R$, then $x$ can be written uniquely as $\displaystyle\sum_{g\in G} x_{g}$, where $x_{g}$ is the component of $x$ in $R_{g}$. The set of all homogeneous elements of $R$ is $\displaystyle\bigcup_{g\in G} R_{g}$ and is denoted by $h(R)$. The component $R_{e}$ is a subring of $R$, and if $R$ has unity $1$, then $1\in R_{e}$. For more terminology, see \cite{Nastasescu}. Let $R$ be a $G$-graded and $P$ be a right ideal of $R$. Then $P$ is said to be a graded right ideal if $P=\displaystyle\bigoplus_{g\in G}(P\bigcap R_{g})$, i.e, for $x\in P$, $x_{g}\in P$ for all $g\in G$. A right deal of a graded ring is not necessary to be graded right ideal, see (\cite{Dawwas Bataineh Muanger}, Example 1).

A proper ideal $P$ of a commutative ring $R$ is said to be prime if $xy\in P$ implies $x\in P$ or $y\in P$ for all $x, y\in R$. The concept of graded prime ideals over commutative graded rings was introduced in \cite{Refai Hailat}. A proper graded ideal $P$ of a commutative graded ring $R$ is said to be graded prime if $xy\in P$ implies $x\in P$ or $y\in P$ for all $x, y\in h(R)$. Assume that $P$ is a prime ideal of a commutative ring $R$. If $R$ is graded and $P$ is a graded ideal of $R$, then it will be easy to see that $P$ is a graded prime ideal of $R$. On the other hand, if $P$ is a graded prime ideal of $R$, then $P$ is not necessary to be prime ideal of $R$, see the following example:

\begin{exa}\label{1}
Consider $R=\mathbb{Z}[i]$ and $G=\mathbb{Z}_{2}$. Then $R$ is $G$-graded by
$R_{0}=\mathbb{Z}$ and $R_{1}=i\mathbb{Z}$. Consider the graded ideal $P=2R$
of $R$. We show that $P$ is a graded prime ideal of $R$. Let $xy\in P$ for
some $x, y\in h(R)$.

\underline{Case (1)}: Assume that $x,y\in R_{0}$. In this case, $x,y\in
\mathbb{Z}$ such that $2$ divides $xy$, and then either $2$ divides $x$ or
$2$ divides $y$ as $2$ is a prime number, which implies that $x\in P$ or
$y\in P$.

\underline{Case (2)}: Assume that $x,y\in R_{1}$. In this case, $x=ia$ and
$y=ib$ for some $a,b\in\mathbb{Z}$ such that $2$ divides $xy=-ab$, and then
$2$ divides $a$ or $2$ divides $b$ in $%
%TCIMACRO{\U{2124} }%
%BeginExpansion
\mathbb{Z}
%EndExpansion
$, which implies that $2$ divides $x=ia$ or $2$ divides $y=ib$ in $R$.\ Then
we have that $x\in P$ or $y\in P$.

\underline{Case (3)}:\ Assume that $x\in R_{0}$ and $y\in R_{1}$. In this
case, $x\in\mathbb{Z}$ and $y=ib$ for some $b\in\mathbb{Z}$ such that $2$
divides $xy=ixb$ in $R$, that is $ixb=2(\alpha+i\beta)$ for some
$\alpha,\beta\in\mathbb{Z}$.\ Then we obtain $xb=2\beta$, that is $2$
divides $xb$ in $\mathbb{Z}$, and again $2$ divides $x$ or $2$ divides $b$,
which implies that $2$ divides $x$ or $2$ divides $y=ib$ in $R$. Thus, $x\in
P$ or $y\in P$.

One can similarly show that $x\in P\ $or $y\in P\ $in other cases. So, $P$ is
a graded prime ideal of $R$. On the other hand, $P$ is not a prime ideal of $R$ since
$(1-i)(1+i)\in P$, $(1-i)\notin P$ and $(1+i)\notin P$.
\end{exa}

Almost prime ideals over commutative rings have been appeared for the first time in \cite{Bhatwadekar}. A proper ideal $P$ of a commutative ring $R$ is said to be almost prime if $xy\in P-P^{2}$ implies $x\in P$ or $y\in P$ for all $x, y\in R$. The concept of graded almost prime ideals over commutative graded rings was introduced in \cite{Jaber Bataineh Khashan}. A proper graded ideal $P$ of a commutative graded ring $R$ is said to be graded almost prime if $xy\in P-P^{2}$ implies $x\in P$ or $y\in P$ for all $x, y\in h(R)$. Assume that $P$ is an almost prime ideal of a commutative ring $R$. If $R$ is graded and $P$ is a graded ideal of $R$, then it will be easy to see that $P$ is a graded almost prime ideal of $R$. On the other hand, if $P$ is a graded almost prime ideal of $R$, then $P$ is not necessary to be almost prime ideal of $R$, see the following example:

\begin{exa} Consider $R=\mathbb{Z}[i]$ and $G=\mathbb{Z}_{2}$. Then $R$ is $G$-graded by
$R_{0}=\mathbb{Z}$ and $R_{1}=i\mathbb{Z}$. Consider the graded ideal $P=2R$
of $R$. Clearly, $P$ is a graded almost prime ideal of $R$ since it is a graded prime ideal by Example \ref{1}. Indeed, $P$ is not an almost prime ideal of $R$ since $(1-i)(1+i)\in P-P^{2}$, $(1-i)\notin P$ and $(1+i)\notin P$.
\end{exa}

Clearly, every graded prime ideal over a commutative graded ring is graded almost prime. However, the next example shows that converse is not true in general:

\begin{exa} Consider $R=\mathbb{Z}_{12}[i]$ and $G=\mathbb{Z}_{2}$. Then $R$ is $G$-graded by $R_{0}=\mathbb{Z}_{12}$ and $R_{1}=i\mathbb{Z}_{12}$. Consider the graded ideal $P=4R$ of $R$. Clearly, $P$ is a graded almost prime ideal of $R$ since $P^{2}=P$. Indeed, $P$ is not a graded prime ideal of $R$ since $2.2\in P$ and $2\notin P$.
\end{exa}

Graded prime ideals over non-commutative graded rings have been defined and examined in \cite{Dawwas Bataineh Muanger}. A proper graded right ideal $P$ of $R$ is called graded prime if $XY\subseteq P$ implies $X\subseteq P$ or $Y\subseteq P$ for all graded right ideals $X, Y$ of $R$. If $R$ has unity, then the previous definition is equivalent to the following: a graded right ideal $P$ of $R$ is called graded prime if $xRy\subseteq P$ implies $x\in P$ or $y\in P$ for all $x, y\in h(R)$. Graded weakly prime ideals over non-commutative graded rings have been introduced and investigated in \cite{Alshehry Dawwas}. A proper graded right ideal $P$ of $R$ is called graded weakly prime if $0\neq XY\subseteq P$ implies $X\subseteq P$ or $Y\subseteq P$, for all graded right ideals $X, Y$ of $R$. The purpose of this article is following \cite{ABOUHALAKA FINDIK} to define and examine graded almost prime ideals over a non-commutative graded ring. We propose the following definition: a proper graded right ideal $P$ of $R$ is called graded almost prime if $XY\subseteq P$ and $XY\nsubseteq P^{2}$ imply $X\subseteq P$ or $Y\subseteq P$ for all graded right ideals $X, Y$ of $R$. A speedy attention provides that our definition and the notion of graded almost prime ideals over commutative graded rings with unity are equivalent. Nevertheless, the definitions disagree over non-commutative graded rings. We show in Theorem \ref{Theorem 2.5} that $I$ is a graded almost prime ideal in a non-commutative graded ring $R$ with unity if and only if $xRy\subseteq I$ and $xRy\nsubseteq I^{2}$ imply
either $x\in I$ or $y\in I$ for all $x, y\in h(R)$. Among several results, we prove that if $R$ is a graded ring with unity and $P$ is a graded ideal of $R$, then $P$ is a graded almost prime right ideal of $R$ if and only if $P$ is a graded almost prime ideal of $R$ (Proposition \ref{Proposition 2.2}). We show that if $R$ is a graded ring with unity and $I$ is a graded right ideal of $R$ with $(I^{2}:I)\subseteq I$, then $I$ is a graded prime right ideal of $R$ if and only if $I$ is a graded almost prime right ideal of $R$ (Theorem \ref{Theorem 2.6}). We prove that if $I$ is a graded ideal of $R$, then $I$ is a graded almost prime right ideal of $R$ if and only if $I/I^{2}$ is a graded weakly prime right ideal of $R/I^{2}$ (Theorem \ref{Theorem 2.11}). In Theorem \ref{Theorem 2.12}, Corollary \ref{Corollary 2.13}, Theorem \ref{Theorem 2.14} and Corollary \ref{Corollary 2.15}, we study graded almost prime right ideals over graded homomorphism. In Theorem \ref{Theorem 2.16}, we examine graded almost prime right ideals over graded quotient rings. Finally, we consider some cases where all graded right ideals of a non-commutative graded ring are graded almost prime. Throughout this article all rings are associative, non-commutative, and without unity unless stated otherwise, and by ideal we mean a proper two sided ideal.

\section{Graded Almost Prime Right Ideals}

In this section, we introduce and study the concept of graded almost prime ideals over non-commutative graded rings.

\begin{defn} Let $R$ be a graded ring and $P$ be a proper graded right ideal of $R$. Then $P$ is said to be graded almost prime if $XY\subseteq P$ and $XY\nsubseteq P^{2}$ imply $X\subseteq P$ or $Y\subseteq P$ for all graded right ideals $X, Y$ of $R$.
\end{defn}

Clearly, every graded prime right ideal is graded almost prime. Indeed, $P=\{0\}$ is a graded almost prime right ideal in any graded ring $R$ that is not necessary to be graded prime. Also, the next examples show that the converse is not true in general:

\begin{exa}\label{2} Consider the non-commutative ring $R=\left\{0, x, y, z\right\}$ under the operations:
\begin{center}
$\begin{array}{ccccc}
  + & 0 & x & y & z \\
  0 & 0 & x & y & z \\
  x & x & 0 & z & y \\
  y & y & z & 0 & x \\
  z & z & y & x & 0
\end{array}$
\hspace{2 cm}
$\begin{array}{ccccc}
   . & 0 & x & y & z \\
   0 & 0 & 0 & 0 & 0 \\
   x & 0 & x & x & 0 \\
   y & 0 & y & y & 0 \\
   z & 0 & z & z & 0
 \end{array}$
 \end{center}
 The only additive subgroups of $R$ are $\{0\}$, $\{0, x\}$, $\{0, y\}$ and $\{0, z\}$. Let $R$ be $G$-graded for some group $G$ and assume that $g\in G-\{e\}$. Then $R_{g}=\{0\}$, $\{0, x\}$, $\{0, y\}$ or $\{0, z\}$. If $R_{g}=\{0, x\}$, then $x=x.x\in R_{g}R_{g}\subseteq R_{g^{2}}$, and then $0\neq x\in R_{g}\bigcap R_{g^{2}}$, which implies that $g=g^{2}$, and hence $g=e$, a contradiction. Similarly, if $R_{g}=\{0, y\}$, then $g=e$, a contradiction. Hence, for $g\neq e$, $R_{g}=\{0\}$ or $\{0, z\}$, and $R_{e}=\{0\}$, $\{0, x\}$ or $\{0, y\}$. So, if we take $G=\mathbb{Z}_{3}$, then we can choose $R$ to be $G$-graded by $R_{0}=\{0, x\}$, $R_{1}=\{0, z\}$ and $R_{2}=\{0\}$. The only right ideals of $R$ are $\{0\}$, $P=\{0, x\}$, $I=\{0, y\}$ and $J=\{0, z\}$. Note that, $I$ is not a graded right ideal of $R$ since $y\in I$ and $y=y_{0}+y_{1}$ with $y_{0}=x, y_{1}=z\notin I$. So, $\{0\}$, $P$ and $J$ are the only graded right ideals of $R$. Since $P^{2}=P$, $P$ is a graded almost prime right ideal of $R$. On the other hand, $P$ is not a graded prime right ideal of $R$ since $J.J=\{0\}\subseteq P$ and $J\nsubseteq P$.
\end{exa}

As an application on Example \ref{2}, we introduce the following example:

\begin{exa}\label{3} Consider the subring of $M_{2}(\mathbb{Z}_{2})$:
\begin{center}
$R=\left\{\left(\begin{array}{cc}
                                       0 & 0 \\
                                       0 & 0
                                     \end{array}
\right), A=\left(\begin{array}{cc}
                 1 & 1 \\
                 0 & 0
               \end{array}
\right), B=\left(\begin{array}{cc}
                 0 & 0 \\
                 1 & 1
               \end{array}
\right), C=\left(\begin{array}{cc}
                 1 & 1 \\
                 1 & 1
               \end{array}
\right)\right\}$
\end{center}
and $G=\mathbb{Z}_{3}$. Depending on the discussion in Example \ref{2}, $R$ is $G$-graded by
\begin{center}
$R_{0}=\left\{\left(\begin{array}{cc}
                                       0 & 0 \\
                                       0 & 0
                                     \end{array}
\right), A\right\}$, $R_{1}=\left\{\left(\begin{array}{cc}
                                       0 & 0 \\
                                       0 & 0
                                     \end{array}
\right), C\right\}$ and $R_{2}=\left\{\left(\begin{array}{cc}
                                       0 & 0 \\
                                       0 & 0
                                     \end{array}
\right)\right\}$
\end{center}
with $P=\left\{\left(\begin{array}{cc}
                                       0 & 0 \\
                                       0 & 0
                                     \end{array}
\right), A\right\}$ is a graded almost prime right ideal of $R$ which is not graded prime.
\end{exa}

\begin{exa}\label{4} Consider the ring with unity $R=\left\{\left(\begin{array}{cc}
                                       a & b \\
                                       0 & c
                                     \end{array}
\right): a, b, c\in \mathbb{R}\right\}$ and $G=\mathbb{Z}_{4}$. Then $R$ is $G$-graded by

$R_{0}=\left\{\left(\begin{array}{cc}
                                       a & 0 \\
                                       0 & c
                                     \end{array}
\right): a, c\in \mathbb{R}\right\}$, $R_{2}=\left\{\left(\begin{array}{cc}
                                       0 & b \\
                                       0 & 0
                                     \end{array}
\right): b\in \mathbb{R}\right\}$ and $R_{1}=R_{3}=\left\{\left(\begin{array}{cc}
                                       0 & 0 \\
                                       0 & 0
                                     \end{array}
\right)\right\}$. Indeed, $P=\left\{\left(\begin{array}{cc}
                                       0 & 0 \\
                                       0 & c
                                     \end{array}
\right): c\in \mathbb{R}\right\}$ is a graded almost prime right ideal of $R$ which is not graded prime since $A=\left(\begin{array}{cc}
                                       0 & 1 \\
                                       0 & 0
                                     \end{array}
\right), B=\left(\begin{array}{cc}
                                       0 & 2 \\
                                       0 & 0
                                     \end{array}
\right)\in h(R)$ with $ARB=\left\{\left(\begin{array}{cc}
                                       0 & 0 \\
                                       0 & 0
                                     \end{array}
\right)\right\}\subseteq P$ and $A, B\notin P$.
\end{exa}

\begin{prop}\label{Proposition 2.2} Let $R$ be a graded ring with unity and $P$ be a graded ideal of $R$. Then $P$ is a graded almost prime right ideal of $R$ if and only if $P$ is a graded almost prime ideal of $R$.
\end{prop}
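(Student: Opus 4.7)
The plan is to prove the two directions separately, with the forward implication being immediate and the reverse requiring us to enlarge graded right ideals to graded two-sided ideals using the unit element.

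For the forward direction ($\Rightarrow$), I would observe that every graded two-sided ideal is in particular a graded right ideal, so if $X, Y$ are graded (two-sided) ideals of $R$ with $XY \subseteq P$ and $XY \nsubseteq P^{2}$, the hypothesis that $P$ is a graded almost prime right ideal directly yields $X \subseteq P$ or $Y \subseteq P$. Hence $P$ is a graded almost prime ideal.

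For the reverse direction ($\Leftarrow$), suppose $P$ is a graded almost prime ideal and let $X, Y$ be graded right ideals of $R$ with $XY \subseteq P$ and $XY \nsubseteq P^{2}$. Set $\widetilde{X} = RX$ and $\widetilde{Y} = RY$. Because $R$ has unity, $X \subseteq \widetilde{X}$ and $Y \subseteq \widetilde{Y}$; moreover, each of $\widetilde{X}, \widetilde{Y}$ is a graded two-sided ideal of $R$, since it is clearly a graded left ideal, and since $X$ is a right ideal we have $(RX)R = R(XR) \subseteq RX$, so $\widetilde{X}$ is also a right ideal (similarly for $\widetilde{Y}$).

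The key computation is to show that $\widetilde{X}\widetilde{Y} \subseteq P$ and $\widetilde{X}\widetilde{Y} \nsubseteq P^{2}$. Since $X$ is a right ideal and $1 \in R$, we have $XR \subseteq X$, and therefore $XRY \subseteq XY \subseteq P$; then using that $P$ is a two-sided (in particular left) ideal, $\widetilde{X}\widetilde{Y} = RXRY = R(XRY) \subseteq RP \subseteq P$. On the other hand, $XY \subseteq \widetilde{X}\widetilde{Y}$, so $XY \nsubseteq P^{2}$ forces $\widetilde{X}\widetilde{Y} \nsubseteq P^{2}$. Applying the hypothesis to the graded two-sided ideals $\widetilde{X}, \widetilde{Y}$ gives $\widetilde{X} \subseteq P$ or $\widetilde{Y} \subseteq P$, and since $X \subseteq \widetilde{X}$ and $Y \subseteq \widetilde{Y}$, we conclude $X \subseteq P$ or $Y \subseteq P$. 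The only subtle point — and the one I would flag as the ``main step'' — is the identity $XRY \subseteq XY$, which is where the right-ideal structure of $X$ combines with the presence of unity to bridge the one-sided and two-sided definitions.
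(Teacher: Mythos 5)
Your proposal is correct and follows essentially the same route as the paper: the easy direction is noted to be immediate, and for the substantive direction both arguments pass from the graded right ideals $X,Y$ to the graded two-sided ideals $RX, RY$, using unity to get $X\subseteq RX$ and $XR\subseteq X$ so that $(RX)(RY)\subseteq P$ and $(RX)(RY)\nsubseteq P^{2}$. No gaps.
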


\begin{proof} Suppose that $P$ is a graded almost prime ideal of $R$. Let $X, Y$ be two graded right ideals of $R$ such that $XY\subseteq P$, and $XY\nsubseteq P^{2}$. Since $R$ has unity, $XR=X$, and then $(RX)(RY)=RXY\subseteq RP=P$, where $RX$ and $RY$ are graded ideals of $R$. Further, if $(RX)(RY)\subseteq P^{2}$, then $XY\subseteq RXY=(RX)(RY)\subseteq P^{2}$ which is a contradiction. So, $(RX)(RY)\nsubseteq P^{2}$. Hence, since $P$ is graded almost prime, we have either $X\subseteq RX\subseteq P$ or $Y\subseteq RY\subseteq P$. Thus, $P$ is a graded almost prime right ideal of $R$. The converse is clear.
\end{proof}

Let $R$ be a ring and $P, K\subseteq R$. Define $(P:K)=\left\{x\in R:Kx\subseteq P\right\}$ and $(P:_{*}K)=\left\{x\in R:xK\subseteq P\right\}$. Suppose that $R$ is a graded ring and $P, K$ are graded right ideals of $R$. Then using a similar proof to (\cite{Dawwas}, Lemma 2.19), one can prove that $(P:K)$ is a graded right ideal of $R$. Also similarly, If $P$ and $K$ are graded left ideals of $R$, then $(P:_{*}K)$ is a graded left ideal of $R$. Note that, if $P$ and $K$ are graded ideals of $R$, then so are $(P:K)$ and $(P:_{*}K)$.

\begin{thm}\label{Theorem 2.5} Let $R$ be a graded ring with unity and $P$ be a graded ideal of $R$. Then the following assertions are equivalent:
\begin{enumerate}
\item $P$ is a graded almost prime ideal of $R$.
\item If $x, y\in h(R)$ such that $\langle x\rangle\langle y\rangle\subseteq P$ and $\langle x\rangle\langle y\rangle\nsubseteq P^{2}$, then either $x\in P$ or $y\in P$.
\item If $x, y\in h(R)$ such that $xRy\subseteq P$ and $xRy\nsubseteq P^{2}$, then either $x\in P$ or $y\in P$.
\item $(P:\langle x\rangle)=P\bigcup(P^{2}:\langle x\rangle)$, and $(P:_{*}\langle x\rangle)=P\bigcup(P^{2}:_{*}\langle x\rangle)$ for all $x\in h(R)-P$.
\item Either $(P:\langle x\rangle)=P$ or $(P:\langle x\rangle)=(P^{2}:\langle x\rangle)$, and either $(P:_{*}\langle x\rangle)=P$ or $(P:_{*}\langle x\rangle)=(P^{2}:_{*}\langle x\rangle)$ for all $x\in h(R)-P$.
\end{enumerate}
\end{thm}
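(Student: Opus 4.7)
My plan is to close a cycle (1) $\Rightarrow$ (2) $\Rightarrow$ (3) $\Rightarrow$ (1) for the element/ideal conditions, and then derive the colon-ideal equivalences (1) $\Leftrightarrow$ (4) $\Leftrightarrow$ (5). For (1) $\Rightarrow$ (2): given $x,y \in h(R)$ with $\langle x\rangle\langle y\rangle \subseteq P$ and $\langle x\rangle\langle y\rangle \nsubseteq P^{2}$, the two-sided principal ideals $\langle x\rangle = RxR$ and $\langle y\rangle = RyR$ are graded two-sided (hence right) ideals, so (1) yields $\langle x\rangle \subseteq P$ or $\langle y\rangle \subseteq P$, and the presence of unity forces $x \in \langle x\rangle$ and $y \in \langle y\rangle$, hence $x \in P$ or $y \in P$. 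For (2) $\Rightarrow$ (3), I would use the identity $\langle x\rangle \langle y\rangle = R(xRy)R$: if $xRy \subseteq P$ then $\langle x\rangle\langle y\rangle \subseteq RPR \subseteq P$, and $\langle x\rangle\langle y\rangle \subseteq P^{2}$ would force $xRy \subseteq \langle x\rangle\langle y\rangle \subseteq P^{2}$ (using $1 \in R$), so (2) applies to deliver $x \in P$ or $y \in P$.

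The principal obstacle is (3) $\Rightarrow$ (1). Let $X,Y$ be graded right ideals with $XY \subseteq P$ and $XY \nsubseteq P^{2}$; by Proposition \ref{Proposition 2.2} I can replace $X,Y$ by the graded two-sided ideals $RX,RY$ and assume $X,Y$ are graded two-sided. Using that $XY$ and $P^{2}$ are graded, I would pick homogeneous $a \in X, b \in Y$ with $ab \notin P^{2}$; then $aRb \subseteq XY \subseteq P$ while $ab \in aRb$ shows $aRb \nsubseteq P^{2}$, so (3) gives $a \in P$ or $b \in P$. The difficulty is converting this pointwise information into the global statement $X \subseteq P$ or $Y \subseteq P$: when the witness $a$ lies in $P$, one must replace it by $a + a^{*}$ for a homogeneous $a^{*} \in X - P$ of the same degree as $a$, exploit that $a^{*} b \in P^{2}$ (otherwise (3) on $a^{*},b$ already closes the proof), so that $(a+a^{*})b \equiv ab \pmod{P^{2}}$ is not in $P^{2}$, and then rerun (3) on $(a+a^{*},b)$ to produce the desired contradiction. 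The delicate point is securing a matching-degree $a^{*}$, which is where both the grading of $X$ and its promotion to a two-sided ideal are used.

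For (4), (5), the equivalence (4) $\Leftrightarrow$ (5) is the standard observation that an additive subgroup contained in a union of two of its subgroups must coincide with one of them. I would prove (4) $\Rightarrow$ (3) by noting that if $xRy \subseteq P$ with $xRy \nsubseteq P^{2}$ and $x \notin P$, then $\langle x\rangle y = RxRy \subseteq P$ puts $y \in (P : \langle x\rangle)$; by (4), either $y \in P$ (done) or $y \in (P^{2} : \langle x\rangle)$, which gives $xRy \subseteq \langle x\rangle y \subseteq P^{2}$, a contradiction. For (3) $\Rightarrow$ (5), I would observe that any homogeneous $r \in (P : \langle x\rangle) - P$ satisfies $xRr \subseteq P^{2}$ by the contrapositive of (3) applied to $x,r$, whence $\langle x\rangle r \subseteq RP^{2} \subseteq P^{2}$ and $r \in (P^{2} : \langle x\rangle)$. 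Combining this with the gradedness of $(P : \langle x\rangle)$ and the same same-degree summation trick used in the obstacle above rules out the simultaneous existence of a homogeneous element of $(P:\langle x\rangle)\setminus P$ and one of $(P:\langle x\rangle)\setminus(P^{2}:\langle x\rangle)$, yielding the dichotomy (5); the symmetric argument handles $(P :_{*} \langle x\rangle)$.
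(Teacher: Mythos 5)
Your implications $(1)\Rightarrow(2)\Rightarrow(3)$ coincide with the paper's, and your $(4)\Leftrightarrow(5)$ and $(4)\Rightarrow(3)$ arguments are sound. The genuine gap is in $(3)\Rightarrow(1)$, and the same flaw recurs in your $(3)\Rightarrow(5)$: your repair of the bad case hinges on producing a homogeneous $a^{*}\in X-P$ \emph{of the same degree} as the witness $a\in X\cap P$, and such an element need not exist. Nothing prevents $X_{g}\subseteq P$ for the particular degree $g$ of $a$ while $X_{h}\nsubseteq P$ only for some other degree $h$; then $a+a^{*}$ is not homogeneous, condition (3) --- a statement about homogeneous elements only --- cannot be applied to it, and the argument stalls exactly on the mixed products $ab$ with $a\in X\cap P$, $b\in Y-P$ (and symmetrically). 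You flag this as ``the delicate point'' but do not resolve it, and as sketched it is not resolvable: promoting $X$ to the two-sided ideal $RX$ does not create homogeneous elements of $X-P$ in the needed degrees in any controlled way.

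The paper avoids the element-by-element route altogether: it proves $(3)\Rightarrow(4)\Rightarrow(5)\Rightarrow(1)$. The work is carried by the colon right ideals: from (3) one shows each homogeneous component of an element of $(P:\langle x\rangle)$ lies in $P\cup(P^{2}:\langle x\rangle)$, the union identity (4) is upgraded to the dichotomy (5) because a group covered by two subgroups equals one of them, and then $(5)\Rightarrow(1)$ uses the decomposition $XY=(X-P)Y+(X\cap P)(Y-P)+(X\cap P)(Y\cap P)$: for $x\in X-P$ one has $Y\subseteq(P:\langle x\rangle)\supsetneq P$, so the dichotomy forces $Y\subseteq(P^{2}:\langle x\rangle)$, i.e.\ $xY\subseteq P^{2}$ \emph{including the part of $Y$ inside $P$} --- precisely the mixed case your approach cannot reach. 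To salvage your plan you should prove $(3)\Rightarrow(5)$ via this colon-ideal mechanism rather than by same-degree summation. Be warned, though, that the degree-mixing difficulty is genuinely present in the graded setting: the paper's own proof of $(3)\Rightarrow(4)$ treats the alternative ``$xRy_{g}\subseteq P^{2}$ or not'' as if it were uniform in $g$, so the passage from the homogeneous statement (3) to the set-theoretic identity (4) deserves more care than either you or the paper currently gives it.
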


\begin{proof} $(1)\Rightarrow(2)$: Suppose that $x, y\in h(R)$ such that $\langle x\rangle\langle y\rangle\subseteq P$ and $\langle x\rangle\langle y\rangle\nsubseteq P^{2}$. Then $(RxR)(RyR)=R\langle x\rangle R\langle y\rangle=R\langle x\rangle\langle y\rangle\subseteq RP=P$. If $(RxR)(RyR)\subseteq P^{2}$, then $\langle x\rangle\langle y\rangle\subseteq R\langle x\rangle\langle y\rangle=(RxR)(RyR)\subseteq P^{2}$ which is a contradiction. Hence, $(RxR)(RyR)\nsubseteq P^{2}$. Thus, either $RxR\subseteq P$ or $RyR\subseteq P$ by (1), which implies that either $x\in P$ or $y\in P$.

$(2)\Rightarrow(3)$: Assume that $x, y\in h(R)$ such that $xRy\subseteq P$ and $xRy\nsubseteq P^{2}$. Then $\langle x\rangle\langle y\rangle=xRyR\subseteq PR=P$. If $\langle x\rangle\langle y\rangle\subseteq P^{2}$, then we get $xRy\subseteq xRyR=\langle x\rangle\langle y\rangle\subseteq P^{2}$, a contradiction. So, $\langle x\rangle\langle y\rangle\nsubseteq P^{2}$, and then either $x\in P$ or $y\in P$ by (2).

$(3)\Rightarrow(4)$: Let $x\in h(R)-P$ and $y\in (P:\langle x\rangle)$. Then $y_{g}\in (P:\langle x\rangle)$ for all $g\in G$, and then for $g\in G$, $xRy_{g}\subseteq \langle x\rangle y_{g}\subseteq P$. If $xRy_{g}\nsubseteq P^{2}$, then by (3), we have $y_{g}\in P$ for all $g\in G$ since $x\notin P$, and then $y\in P$. If
$xRy_{g}\subseteq P^{2}$, then $\langle x\rangle y_{g}=RxRy_{g}\subseteq RP^{2}=P^{2}$. Hence, $y_{g}\in (P^{2}:\langle x\rangle)$ for all $g\in G$, which implies that $y\in (P^{2}:\langle x\rangle)$. Therefore, $(P:\langle x\rangle)\subseteq P\bigcup(P^{2}:\langle x\rangle)$. Let $y\in P\bigcup(P^{2}:\langle x\rangle)$. If $y\in P$, then $y_{g}\in P$ for all $g\in G$, and then for $g\in G$, $\langle x\rangle y_{g}\subseteq \langle x\rangle P\subseteq P$, and thus $y_{g}\in (P:\langle x\rangle)$ for all $g\in G$, that gives $y\in (P:\langle x\rangle)$. If $y\in(P^{2}:\langle x\rangle)$, then $y_{g}\in(P^{2}:\langle x\rangle)$ for all $g\in G$, and then for $g\in G$, $\langle x\rangle y_{g}\subseteq P^{2}\subseteq P$, which implies that $y_{g}\in (P:\langle x\rangle)$ for all $g\in G$, which yields that $y\in (P:\langle x\rangle)$. Thus, $P\bigcup(P^{2}:\langle x\rangle)\subseteq (P:\langle x\rangle)$, and consequently, $(P:\langle x\rangle)=P\bigcup(P^{2}:\langle x\rangle)$. Similarly, one can prove that $(P:_{*}\langle x\rangle)=P\bigcup(P^{2}:_{*}\langle x\rangle)$.

$(4)\Rightarrow(5)$: It is understandable.

$(5)\Rightarrow(1)$: Let $X$ and $Y$ be two graded ideals of $R$ such that $XY\subseteq P$. Suppose that $X\nsubseteq P$ and $Y\nsubseteq P$. We show that $XY\subseteq P^{2}$. Firstly, we show that $(X-P)Y\subseteq P^{2}$. Let $x\in X-P$. Then we have $\langle x\rangle Y\subseteq XY\subseteq P$, which implies that $Y\subseteq (P:\langle x\rangle)$, and then by assumption, we have $Y\subseteq (P^{2}:\langle x\rangle)$ since $Y\nsubseteq P$. Therefore, $xY\subseteq \langle x\rangle Y\subseteq P^{2}$. Consequently, $(X-P)Y\subseteq P^{2}$. Secondly, we show that $X(Y-P)\subseteq P^{2}$. Let $y\in Y-P$. Then, $X\langle y\rangle\subseteq XY\subseteq P$, and so $X\subseteq (P:_{*}\langle y\rangle)$, and then by assumption we get $X\subseteq (P^{2}:_{*}\langle y\rangle)$ since $X\nsubseteq P$. Thus, $Xy\subseteq X\langle y\rangle\subseteq P^{2}$, consequently $X(Y-P)\subseteq P^{2}$. The final step that completes the proof is observing that
\begin{center}
$XY=(X-P)Y+(X\bigcap P)(Y-P)+(X\bigcap P)(Y\bigcap P)\subseteq (X-P)Y+X(Y-P)+(X\bigcap P)(Y\bigcap P)\subseteq P^{2}$.
\end{center}
\end{proof}

Examples \ref{2}, \ref{3} and \ref{4} show that a graded almost prime right ideal does not have to be a graded prime right ideal. In the next result, we consider a case such that the concepts above are corresponding.

\begin{thm}\label{Theorem 2.6} Let $R$ be a graded ring with unity and $I$ be a graded right ideal of $R$ with $(I^{2}:I)\subseteq I$. Then $I$ is a graded prime right ideal of $R$ if and only if $I$ is a graded almost prime right ideal of $R$.
\end{thm}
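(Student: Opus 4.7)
The plan is to observe that one direction is immediate and concentrate on the converse. Every graded prime right ideal is graded almost prime directly from the definitions (as noted right after Definition~2.1), so the forward implication is free. It is the reverse implication that uses the hypothesis $(I^{2}:I)\subseteq I$.

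Assume $I$ is graded almost prime and let $X,Y$ be graded right ideals with $XY\subseteq I$; I want to force $X\subseteq I$ or $Y\subseteq I$. I would argue by contradiction, assuming both $X\nsubseteq I$ and $Y\nsubseteq I$. Applying the graded almost prime property directly to $X$ and $Y$ only gives $XY\subseteq I^{2}$, which by itself is too weak: the information on $I^{2}$ is attached to $X$ rather than to $I$, so there is no way to feed it into $(I^{2}:I)\subseteq I$.

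The key step is to enlarge $X$: replace it by the graded right ideal $X+I$. This still satisfies $X+I\nsubseteq I$ (since $X\nsubseteq I$), and $(X+I)Y=XY+IY\subseteq I$, because $I$ being a right ideal and $Y\subseteq R$ give $IY\subseteq IR\subseteq I$. Now the graded almost prime property applied to the pair $X+I,\,Y$ (neither of which is contained in $I$) yields $(X+I)Y\subseteq I^{2}$, and in particular the crucial inclusion $IY\subseteq I^{2}$.

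From here the hypothesis closes the argument: for any homogeneous $y\in Y$, $Iy\subseteq IY\subseteq I^{2}$, so $y\in (I^{2}:I)\subseteq I$. Because $Y$ is graded, every element of $Y$ is a finite sum of homogeneous components, each of which lies in $I$, so $Y\subseteq I$, a contradiction. The main obstacle, and the only nonroutine ingredient, is spotting the augmentation $X\rightsquigarrow X+I$ that upgrades $XY\subseteq I^{2}$ to the stronger $IY\subseteq I^{2}$; without this move there is no foothold for the hypothesis $(I^{2}:I)\subseteq I$.
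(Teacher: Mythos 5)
Your proof is correct and rests on the same key idea as the paper's: augmenting one factor of the product by $I$ so that the product picks up $IY$ (respectively $Iy_{h}R$ in the paper), forcing it into $I^{2}$ via almost primeness and then feeding it to the hypothesis $(I^{2}:I)\subseteq I$. The only difference is in execution: the paper descends to homogeneous elements $x_{g}\in X-I$, $y_{h}\in Y-I$ and applies the almost prime property to the pair $x_{g}R+I$, $y_{h}R$ (using the unity of $R$ at several points), whereas you apply it directly to $X+I$ and $Y$, which is slightly cleaner and in fact never invokes the unity hypothesis.
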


\begin{proof} Suppose that $I$ is a graded almost prime right ideal of $R$ which is not graded prime. Then there exist graded right ideals $X, Y$ of $R$ such that $XY\subseteq I$ with $X\nsubseteq I$ and $Y\nsubseteq I$, and then $XY\subseteq I^{2}$. Let $x\in X-I$ and $y\in Y-I$. Then there exist $g, h\in G$ such that $x_{g}\in X-I$ and $y_{h}\in Y-I$, and then $(x_{g}R+I)y_{h}R=x_{g}Ry_{h}R+Iy_{h}R\subseteq XY+Iy_{h}R\subseteq I$. If $(x_{g}R+I)y_{h}R\subseteq I^{2}$, then $Iy_{h}R\subseteq I^{2}$. This implies that $Iy_{h}\subseteq I^{2}$, and thus $y_{h}\in(I^{2}:I)\subseteq I$, a contradiction. Hence, either $x_{g}R+I\subseteq I$ or $y_{h}R\subseteq I$, which implies that $x_{g}\in I$ or $y_{h}\in I$, a contradiction. The converse is obvious.
\end{proof}

\begin{thm}\label{Theorem 2.7} Let $R$ be graded ring and $I$ be a graded right ideal of $R$ such that $I^{2}=\{0\}$. Then $I$ is a graded weakly prime right ideal of $R$ if and only if $I$ is a graded almost prime right ideal of $R$.
\end{thm}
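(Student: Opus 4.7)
The plan is to observe that the hypothesis $I^{2}=\{0\}$ makes the defining conditions of graded weakly prime and graded almost prime right ideals literally identical. Indeed, for any graded right ideals $X,Y$ of $R$, the condition $XY\nsubseteq I^{2}$ becomes $XY\nsubseteq \{0\}$, which is equivalent to $XY\neq \{0\}$, i.e.\ $0\neq XY$. Thus the pair of hypotheses ``$XY\subseteq I$ and $XY\nsubseteq I^{2}$'' appearing in the almost prime definition coincides exactly with ``$0\neq XY\subseteq I$'' appearing in the weakly prime definition.

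With this observation in hand, I would split the argument into its two symmetric directions. For the forward implication, I would suppose $I$ is graded weakly prime, take graded right ideals $X,Y$ with $XY\subseteq I$ and $XY\nsubseteq I^{2}$, and note that since $I^{2}=\{0\}$ this gives $0\neq XY\subseteq I$; the weakly prime hypothesis then yields $X\subseteq I$ or $Y\subseteq I$, as required. For the converse, I would suppose $I$ is graded almost prime, take graded right ideals $X,Y$ with $0\neq XY\subseteq I$, and observe that $XY\neq\{0\}$ is the same as $XY\nsubseteq I^{2}$; the almost prime hypothesis then delivers $X\subseteq I$ or $Y\subseteq I$.

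There is no real obstacle here: the whole statement is a transparent consequence of the definitions once one notices that, under $I^{2}=\{0\}$, being ``not contained in $I^{2}$'' is the same as being ``nonzero''. The only thing worth being careful about is simply writing out both directions explicitly so that the equivalence of the two defining conditions is visible to the reader.
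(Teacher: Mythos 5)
Your proposal is correct and matches the paper's intent exactly: the paper's own proof is simply the remark ``It is understandable,'' relying on the same observation that under $I^{2}=\{0\}$ the conditions $XY\nsubseteq I^{2}$ and $0\neq XY$ coincide, so the two definitions become identical. You have merely written out explicitly what the paper leaves to the reader.
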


\begin{proof} It is understandable.
\end{proof}

\begin{rem} In Example \ref{2}, $J=\{0, z\}$ is a graded almost prime right ideal of $R$ and $J^{2}=\{0\}$. Hence, $J$ is a graded weakly prime right ideal of $R$ by Theorem \ref{Theorem 2.7}.
\end{rem}

\begin{cor}\label{Corollary 2.8} Let $R$ be graded ring such that $R^{2}=\{0\}$ and $I$ be a graded right ideal of $R$. Then $I$ is a graded weakly prime right ideal of $R$ if and only if $I$ is a graded almost prime right ideal of $R$.
\end{cor}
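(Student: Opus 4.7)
The plan is to derive this corollary as a direct application of Theorem \ref{Theorem 2.7}. The only hypothesis that needs verification is that $I^{2}=\{0\}$, and this should follow immediately from the ambient assumption $R^{2}=\{0\}$.

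More precisely, I would first observe that since $I$ is a graded right ideal of $R$, in particular $I\subseteq R$, and hence
\begin{center}
$I^{2}=I\cdot I\subseteq R\cdot R=R^{2}=\{0\}$,
\end{center}
so $I^{2}=\{0\}$. Note that this uses nothing beyond the containment $I\subseteq R$ and the assumption $R^{2}=\{0\}$; in particular, no gradedness argument is required for this step.

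Once $I^{2}=\{0\}$ has been established, Theorem \ref{Theorem 2.7} applies verbatim to $I$ and yields that $I$ is a graded weakly prime right ideal of $R$ if and only if $I$ is a graded almost prime right ideal of $R$, which is precisely the assertion of the corollary.

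There is essentially no obstacle here; the corollary is a specialization of Theorem \ref{Theorem 2.7} to the case where the square-zero hypothesis on $I$ is automatic. The only thing one should be careful about is making explicit that the equivalence is being applied to the specific graded right ideal $I$ appearing in the hypothesis, rather than to the whole ring, and that the step $I^{2}\subseteq R^{2}$ is valid for right ideals without any additional structure.
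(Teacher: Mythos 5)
Your proposal is correct and matches the paper's own proof, which simply cites Theorem \ref{Theorem 2.7}; your explicit verification that $I^{2}\subseteq R^{2}=\{0\}$ is exactly the (routine) step the paper leaves implicit.
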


\begin{proof} The result is a consequence of Theorem \ref{Theorem 2.7}.
\end{proof}

Let $R$ be a $G$-graded ring and $P$ be a graded ideal of $R$. Then $R/P$ is a $G$-graded ring by $(R/P)_{g}=(R_{g}+P)/P$ for all $g\in G$. It has been proved in (\cite{Jaber Bataineh Khashan}, Theorem 3) that if $R$ is a commutative graded ring, then $I$ is a graded almost prime ideal of $R$ if and only if $I/I^{2}$ is a graded weakly prime ideal of $R/I^{2}$. In the next result, we prove that the same holds for graded almost prime right ideals and graded weakly prime right ideals in a non-commutative graded ring.

\begin{thm}\label{Theorem 2.11} Let $R$ be a graded ring and $I$ be a graded ideal of $R$. Then $I$ is a graded almost prime right ideal of $R$ if and only if $I/I^{2}$ is a graded weakly prime right ideal of $R/I^{2}$.
\end{thm}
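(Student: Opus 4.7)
The plan is to mirror the standard correspondence-theorem argument used in the commutative setting (\cite{Jaber Bataineh Khashan}, Theorem 3), but working throughout with graded right ideals instead of graded ideals. Since $I^{2}$ is a graded ideal of $R$, the quotient $R/I^{2}$ is $G$-graded, and there is a bijection between the graded right ideals of $R/I^{2}$ and the graded right ideals of $R$ containing $I^{2}$. The only algebraic fact I will repeatedly invoke is that under this correspondence, if $X,Y$ are graded right ideals of $R$, then $(X+I^{2})/I^{2}\cdot(Y+I^{2})/I^{2}=(XY+I^{2})/I^{2}$, together with the trivial observation that $XY\subseteq I^{2}$ is equivalent to $(XY+I^{2})/I^{2}=\{0\}$ in $R/I^{2}$, and $XY\subseteq I$ is equivalent to $(XY+I^{2})/I^{2}\subseteq I/I^{2}$.

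For the forward direction, I would assume $I$ is a graded almost prime right ideal of $R$ and pick graded right ideals $\overline{X},\overline{Y}$ of $R/I^{2}$ with $\{0\}\neq\overline{X}\,\overline{Y}\subseteq I/I^{2}$. Lifting through the correspondence, write $\overline{X}=X/I^{2}$ and $\overline{Y}=Y/I^{2}$ for graded right ideals $X,Y$ of $R$ with $I^{2}\subseteq X\cap Y$. The hypothesis $\overline{X}\,\overline{Y}\subseteq I/I^{2}$ rewrites as $XY\subseteq I$, and $\overline{X}\,\overline{Y}\neq\{0\}$ rewrites as $XY\nsubseteq I^{2}$. Then the graded almost prime property of $I$ yields $X\subseteq I$ or $Y\subseteq I$, which descends to $\overline{X}\subseteq I/I^{2}$ or $\overline{Y}\subseteq I/I^{2}$.

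For the converse, I would assume $I/I^{2}$ is a graded weakly prime right ideal of $R/I^{2}$ and pick graded right ideals $X,Y$ of $R$ with $XY\subseteq I$ and $XY\nsubseteq I^{2}$. The natural candidates in the quotient are $\overline{X}:=(X+I^{2})/I^{2}$ and $\overline{Y}:=(Y+I^{2})/I^{2}$, which are graded right ideals of $R/I^{2}$. Their product is $(XY+I^{2})/I^{2}$, which sits inside $I/I^{2}$ because $XY\subseteq I$ and is nonzero because $XY\nsubseteq I^{2}$. Graded weak primeness of $I/I^{2}$ then forces $\overline{X}\subseteq I/I^{2}$ or $\overline{Y}\subseteq I/I^{2}$, equivalently $X+I^{2}\subseteq I$ or $Y+I^{2}\subseteq I$; since $I^{2}\subseteq I$ automatically, this collapses to $X\subseteq I$ or $Y\subseteq I$, as required.

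The main obstacle is really bookkeeping rather than mathematics: one must verify that passing to $X+I^{2}$ in the converse does not destroy gradedness (it does not, since $I^{2}$ is a graded ideal and the sum of graded right ideals is graded), and one must be careful that the product of cosets used throughout genuinely coincides with the coset of the product, so that the two non-containment/containment conditions in the definition of graded almost prime translate exactly to the nonzero/containment conditions in the definition of graded weakly prime. Once those translations are in hand, the proof is purely a matter of unwinding definitions in both directions.
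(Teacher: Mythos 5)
Your proposal is correct and follows essentially the same route as the paper: lift graded right ideals through the correspondence with $R/I^{2}$ in the forward direction, and push $X,Y$ down to $(X+I^{2})/I^{2}$ and $(Y+I^{2})/I^{2}$ in the converse, using the identity $\overline{X}\,\overline{Y}=(XY+I^{2})/I^{2}$ (which the paper writes out as $(XY+XI^{2}+I^{2}Y+I^{4}+I^{2})/I^{2}$) to translate the containment and non-vanishing conditions. No substantive differences.
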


\begin{proof} Suppose that $I$ is a graded almost prime right ideal of $R$. Assume that $\overline{X}$ and $\overline{Y}$ are graded right ideals of $R/I^{2}$ such that $\overline{0}\neq \overline{X}\overline{Y}\subseteq \overline{I}=I/I^{2}$. Then there exist graded right ideals $X\supseteq I^{2}$ and $Y\supseteq I^{2}$ of $R$ such that $\overline{X}=X/I^{2}$ and $\overline{Y}=Y/I^{2}$. Therefore, $I^{2}/I^{2}\neq(XY+I^{2})/I^{2}\subseteq I/I^{2}$, hence $I^{2}\neq XY\subseteq I$. So, we have that either $X\subseteq I$ or $Y\subseteq I$ since $XY\nsubseteq I^{2}$. This gives that either $\overline{X}\subseteq \overline{I}$ or $\overline{Y}\subseteq \overline{I}$. Conversely, suppose that $X$ and $Y$ are graded right ideals of $R$ such that $XY\subseteq I$ and $XY\nsubseteq I^{2}$.
Then $\overline{X}=(X+I^{2})/I^{2}$ and $\overline{Y}=(Y+I^{2})/I^{2}$ are graded right ideals of $R/I^{2}$. Moreover, $\overline{X}\hspace{0.1cm}\overline{Y}=(XY+XI^{2}+I^{2}Y+I^{4}+I^{2})/I^{2}\subseteq I/I^{2}=\overline{I}$, and $\overline{X}\hspace{0.1cm}\overline{Y}\nsubseteq\overline{I^{2}}$. Thus, $\overline{0}\neq\overline{X}\hspace{0.1cm}\overline{Y}\subseteq \overline{I}$ and hence either $\overline{X}\subseteq \overline{I}$ or $\overline{Y}\subseteq\overline{I}$. Consequently, $X\subseteq I$ or $Y\subseteq I$.
\end{proof}

Let $R$ and $T$ be two $G$-graded rings. Then a ring homomorphism $f:R\rightarrow T$ is said to be a graded homomorphism if $f(R_{g})\subseteq T_{g}$ for all $g\in G$ \cite{Nastasescu}.

\begin{thm}\label{Theorem 2.12} Let $f:R\rightarrow T$ be a graded ring epimorphism and $I$ be a graded almost prime right ideal of $R$ such that $Ker(f)\subseteq I$. Then $f(I)$ is a graded almost prime right ideal of $T$.
\end{thm}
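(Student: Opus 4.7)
The plan is to verify the graded almost prime condition on $f(I)$ by pulling graded right ideals of $T$ back to $R$ along $f$ and invoking the hypothesis that $I$ is graded almost prime. First I would record the routine fact that $f(I)$ is itself a proper graded right ideal of $T$: gradedness follows from $f(R_g)\subseteq T_g$, the right-ideal property uses surjectivity of $f$ (any $t\in T$ is $f(r)$, so $f(i)t=f(ir)\in f(I)$), and properness uses $\ker(f)\subseteq I$ — if $f(I)=T$ then every $r\in R$ would satisfy $f(r)=f(i)$ for some $i\in I$, forcing $r-i\in\ker(f)\subseteq I$ and thus $r\in I$, contradicting properness of $I$.

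For the main step, I take graded right ideals $X',Y'$ of $T$ with $X'Y'\subseteq f(I)$ and $X'Y'\nsubseteq f(I)^{2}$, and set $X=f^{-1}(X')$, $Y=f^{-1}(Y')$. Because $f$ is a graded homomorphism, the preimage of a graded right ideal is a graded right ideal (checking components: if $x=\sum x_g\in X$, then $f(x)\in X'$ decomposes as $\sum f(x_g)$ with $f(x_g)\in T_g$, so each $f(x_g)\in X'$ by gradedness of $X'$, whence $x_g\in X$). Both $X$ and $Y$ contain $\ker(f)$, and surjectivity gives $f(X)=X'$, $f(Y)=Y'$. The key identity is $f(AB)=f(A)f(B)$ for subsets closed under finite sums of products, which follows from $f(\sum a_i b_i)=\sum f(a_i)f(b_i)$. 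Applying this I would argue: $f(XY)=X'Y'\subseteq f(I)$, and for any $z\in XY$ there is $i\in I$ with $f(z)=f(i)$, so $z-i\in\ker(f)\subseteq I$ and $z\in I$; hence $XY\subseteq I$. Next, if $XY\subseteq I^{2}$ then $X'Y'=f(XY)\subseteq f(I^{2})=f(I)^{2}$, contradicting the hypothesis, so $XY\nsubseteq I^{2}$. Since $I$ is graded almost prime as a right ideal, $X\subseteq I$ or $Y\subseteq I$, and applying $f$ yields $X'\subseteq f(I)$ or $Y'\subseteq f(I)$.

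The only real obstacle is the step $XY\subseteq I$, where one might be tempted to conclude it directly from $f(XY)\subseteq f(I)$; this is precisely the place where the hypothesis $\ker(f)\subseteq I$ is indispensable, and it is worth highlighting that without it the theorem fails. Every other ingredient — gradedness of the preimage, properness of $f(I)$, the identity $f(AB)=f(A)f(B)$, and the transport of $XY\nsubseteq I^{2}$ to $X'Y'\nsubseteq f(I)^{2}$ — is formal and needs only to be recorded carefully rather than proved anew.
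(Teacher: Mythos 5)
Your proposal is correct and follows essentially the same route as the paper: pull $X',Y'$ back to $X=f^{-1}(X')$, $Y=f^{-1}(Y')$, use surjectivity to get $f(X)f(Y)=X'Y'$, use $\ker(f)\subseteq I$ (equivalently $f^{-1}(f(I))=I$) to obtain $XY\subseteq I$, transport $X'Y'\nsubseteq f(I)^2=f(I^2)$ to $XY\nsubseteq I^2$, and apply the hypothesis on $I$. You merely make explicit a few routine verifications (properness and gradedness of $f(I)$, gradedness of preimages) that the paper leaves implicit.
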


\begin{proof} Suppose that $CD\subseteq f(I)$ and $CD\nsubseteq (f(I))^{2}$ for graded right ideals $C$ and $D$ of $T$. Then $Ker(f)\subseteq f^{-1}(C)=X$ and $Ker(f)\subseteq f^{-1}(D)=Y$ are graded right ideals of $R$. Hence, $f(X)=C$ and $f(Y)=D$ since $f$ is an epimorphism, and then we have that $f(XY)=CD\subseteq f(I)$, and $f(XY)\nsubseteq (f(I))^{2}=f(I^{2})$. Thus $XY\subseteq f^{-1}(f(XY))\subseteq f^{-1}(f(I))=I$ and $XY\nsubseteq I^{2}$. So, either $X\subseteq I$ or $Y\subseteq I$, and so either $C\subseteq f(I)$ or $D\subseteq f(I)$.
\end{proof}

\begin{cor}\label{Corollary 2.13} Let $f:R\rightarrow T$ be a graded ring epimorphism and $J$ be a graded right ideal of $T$ such that $f^{-1}(J)$ is a graded almost prime right ideal of $R$. Then $J$ is a graded almost prime right ideal of $T$.
\end{cor}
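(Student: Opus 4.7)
The plan is to apply Theorem \ref{Theorem 2.12} directly to the graded almost prime right ideal $I := f^{-1}(J)$. All the work is in checking that the hypotheses of that theorem are already in place and then reading off the conclusion; no new manipulation of graded right ideals is needed.

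First I would verify the hypothesis $\mathrm{Ker}(f)\subseteq f^{-1}(J)$: this is immediate, since for every $x\in \mathrm{Ker}(f)$ one has $f(x)=0\in J$, hence $x\in f^{-1}(J)$. Next, since $f$ is a graded ring epimorphism, the standard set-theoretic identity $f(f^{-1}(J))=J$ holds. Theorem \ref{Theorem 2.12} now applies to the graded almost prime right ideal $I=f^{-1}(J)$ and yields that $f(I)$ is a graded almost prime right ideal of $T$. Combining this with $f(I)=J$ gives the desired conclusion.

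The main obstacle: there genuinely is none, since the corollary is a direct repackaging of Theorem \ref{Theorem 2.12} in the presence of surjectivity. The only pre-verification worth flagging is that $f^{-1}(J)$ really is a graded right ideal of $R$, but this is implicit in the hypothesis and in any case follows from $f$ being a graded ring homomorphism together with $J$ being a graded right ideal of $T$ (so preimages of homogeneous components are homogeneous, and the preimage of a right ideal is a right ideal).
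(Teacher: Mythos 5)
Your proposal is correct and matches the paper's proof exactly: both apply Theorem \ref{Theorem 2.12} to $I=f^{-1}(J)$ after noting that $\mathrm{Ker}(f)\subseteq f^{-1}(J)$ and that surjectivity gives $f(f^{-1}(J))=J$. Your version merely spells out these verifications in more detail than the paper does.
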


\begin{proof} Since $f^{-1}(J)$ is a graded right ideal of $R$ and $Ker(f)\subseteq f^{-1}(J)$, the result follows by Theorem \ref{Theorem 2.12}.
\end{proof}

\begin{thm}\label{Theorem 2.14} Let $f:R\rightarrow T$ be a graded ring epimorphism and $I$ be a graded right ideal of $R$ such that $Ker(f)\subseteq I^{2}$. If $f(I)$ is a graded almost prime right ideal of $T$, then $I$ is a graded almost prime right ideal of $R$.
\end{thm}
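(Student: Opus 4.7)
The plan is to transport the almost-prime witness from $R$ up to $T$ via $f$, invoke the hypothesis that $f(I)$ is graded almost prime in $T$, and then pull the conclusion back to $R$ using the assumption $\mathrm{Ker}(f)\subseteq I^{2}$. So I would start by fixing graded right ideals $X$ and $Y$ of $R$ with $XY\subseteq I$ and $XY\nsubseteq I^{2}$, and aim to show $X\subseteq I$ or $Y\subseteq I$. Because $f$ is a graded ring epimorphism, $f(X)$ and $f(Y)$ are graded right ideals of $T$, and plainly $f(X)f(Y)=f(XY)\subseteq f(I)$.

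The key technical step is to verify $f(X)f(Y)\nsubseteq f(I)^{2}=f(I^{2})$. I would argue by contrapositive: if $f(XY)\subseteq f(I^{2})$, then every $a\in XY$ satisfies $a=b+k$ for some $b\in I^{2}$ and $k\in\mathrm{Ker}(f)$, and since $\mathrm{Ker}(f)\subseteq I^{2}$ we get $a\in I^{2}$, contradicting $XY\nsubseteq I^{2}$. With $f(X)f(Y)\subseteq f(I)$ and $f(X)f(Y)\nsubseteq f(I)^{2}$ in hand, the hypothesis that $f(I)$ is a graded almost prime right ideal of $T$ yields $f(X)\subseteq f(I)$ or $f(Y)\subseteq f(I)$. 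Finally, to descend, note that $f^{-1}(f(I))=I+\mathrm{Ker}(f)\subseteq I+I^{2}\subseteq I$, so $f(X)\subseteq f(I)$ forces $X\subseteq f^{-1}(f(I))\subseteq I$, and analogously for $Y$.

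The only genuine obstacle is the nonembedding step $f(XY)\nsubseteq f(I^{2})$; everything else is bookkeeping that rides on $f$ being a graded epimorphism and on the containment $\mathrm{Ker}(f)\subseteq I^{2}$. It is worth noting that $\mathrm{Ker}(f)\subseteq I^{2}$ is used twice, once to rule out that passing to $T$ collapses the obstruction $XY\nsubseteq I^{2}$, and once (in the weaker form $\mathrm{Ker}(f)\subseteq I$) to guarantee $f^{-1}(f(I))=I$; this is the reason the hypothesis is placed at $I^{2}$ rather than at $I$.
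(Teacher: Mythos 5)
Your proposal is correct and follows essentially the same route as the paper: push $X,Y$ forward to $f(X),f(Y)$, show $f(XY)\nsubseteq f(I)^{2}=f(I^{2})$ by using $\mathrm{Ker}(f)\subseteq I^{2}$ to pull any such containment back to $XY\subseteq I^{2}$, apply the almost-primeness of $f(I)$, and descend via $f^{-1}(f(I))=I$. The only difference is cosmetic: you spell out the element-level computation $f^{-1}(f(I^{2}))=I^{2}+\mathrm{Ker}(f)=I^{2}$ where the paper states it directly.
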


\begin{proof} Suppose that $XY\subseteq I$ and $XY\nsubseteq I^{2}$ for graded right ideals $X$ and $Y$ of $R$. Then $f(X)f(Y)=f(XY)\subseteq f(I)$. If $f(XY)\subseteq f(I^{2})$, then $XY\subseteq f^{-1}(f(XY))\subseteq f^{-1}(f(I^{2}))=I^{2}$, a contradiction. Hence, $f(X)f(Y)=f(XY)\nsubseteq (f(I))^{2}$. Since $f(I)$ is a graded almost prime right ideal of $T$, either $f(X)\subseteq f(I)$ or $f(Y)\subseteq f(I)$. Therefore, either $X\subseteq f^{-1}(f(X))\subseteq f^{-1}(f(I))=I$ or $Y\subseteq I$.
\end{proof}

\begin{cor}\label{Corollary 2.15} Let $f:R\rightarrow T$ be a graded ring epimorphism and $J$ be a graded almost prime right ideal of $T$ such that $Ker(f)\subseteq(f^{-1}(J))^{2}$. Then $f^{-1}(J)$ is a graded almost prime right ideal of $R$.
\end{cor}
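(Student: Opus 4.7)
The plan is to reduce Corollary \ref{Corollary 2.15} directly to Theorem \ref{Theorem 2.14} by choosing $I=f^{-1}(J)$. First I would verify that $I$ is indeed a graded right ideal of $R$: since $f$ is a graded ring homomorphism and $J$ is a graded right ideal of $T$, the preimage $f^{-1}(J)$ is a right ideal of $R$, and graded because each homogeneous component $x_g$ of $x\in f^{-1}(J)$ satisfies $f(x_g)\in T_g$, and the decomposition $f(x)=\sum_g f(x_g)$ must agree with the homogeneous decomposition of $f(x)\in J$, so each $f(x_g)\in J$, whence $x_g\in f^{-1}(J)$.

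Next I would check the two hypotheses needed by Theorem \ref{Theorem 2.14}. The inclusion $Ker(f)\subseteq I^{2}=(f^{-1}(J))^{2}$ is given by assumption. Since $f$ is an epimorphism (surjective), we have $f(I)=f(f^{-1}(J))=J$, which by hypothesis is a graded almost prime right ideal of $T$.

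Applying Theorem \ref{Theorem 2.14} with this $I$ then yields that $I=f^{-1}(J)$ is a graded almost prime right ideal of $R$, which is exactly the conclusion sought. There is no substantive obstacle: the entire corollary is a one-line specialization of the preceding theorem, and the only point that even requires a remark is that surjectivity of $f$ gives $f(f^{-1}(J))=J$, so that the hypothesis ``$f(I)$ is graded almost prime'' of Theorem \ref{Theorem 2.14} becomes simply ``$J$ is graded almost prime.''
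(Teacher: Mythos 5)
Your proposal is correct and is exactly the paper's argument: set $I=f^{-1}(J)$, observe that $Ker(f)\subseteq I^{2}$ holds by hypothesis and that surjectivity gives $f(I)=f(f^{-1}(J))=J$, then apply Theorem \ref{Theorem 2.14}. Your extra check that $f^{-1}(J)$ is a graded right ideal is a harmless addition the paper leaves implicit.
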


\begin{proof} Assume that $I=f^{-1}(J)$. Then $I$ is a graded almost prime right ideal of $R$ by Theorem \ref{Theorem 2.14} since $Ker(f)\subseteq I^{2}$ and $f(I)=f(f^{-1}(J))=J$ is a graded almost prime right ideal of $T$.
\end{proof}

\begin{thm}\label{Theorem 2.16} Let $R$ be a graded ring and $K$ be a graded ideal of $R$. Suppose that $P$ is a graded right ideal of $R$ such that $K\subseteq P$. If $P$ is a graded almost prime right ideal of $R$, then $P/K$ is a graded almost prime right ideal of $R/K$.
\end{thm}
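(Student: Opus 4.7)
The plan is to use the standard correspondence between graded right ideals of the quotient ring $R/K$ and graded right ideals of $R$ containing $K$, and reduce the hypothesis on $P/K$ to the hypothesis on $P$ that we already have.

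First I would take arbitrary graded right ideals $\overline{X},\overline{Y}$ of $R/K$ with $\overline{X}\,\overline{Y}\subseteq P/K$ and $\overline{X}\,\overline{Y}\nsubseteq (P/K)^{2}$, and lift them to graded right ideals $X,Y$ of $R$ containing $K$ so that $\overline{X}=X/K$ and $\overline{Y}=Y/K$. The condition $\overline{X}\,\overline{Y}\subseteq P/K$ translates, after using $K\subseteq P$, to $XY\subseteq P$ in $R$.

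The second step is the key technical observation: $(P/K)^{2}=(P^{2}+K)/K$. If we had $XY\subseteq P^{2}$, then $\overline{X}\,\overline{Y}=(XY+K)/K\subseteq (P^{2}+K)/K=(P/K)^{2}$, contradicting the assumption $\overline{X}\,\overline{Y}\nsubseteq (P/K)^{2}$. Thus $XY\nsubseteq P^{2}$. Now the hypothesis that $P$ is a graded almost prime right ideal of $R$ applies to $X,Y$ and yields $X\subseteq P$ or $Y\subseteq P$, whence $\overline{X}\subseteq P/K$ or $\overline{Y}\subseteq P/K$.

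The main obstacle, such as it is, lies in justifying cleanly that the correspondence $X\leftrightarrow X/K$ between graded right ideals of $R$ containing $K$ and graded right ideals of $R/K$ preserves products in the sense $(P/K)^{2}=(P^{2}+K)/K$; this is routine from the definition of multiplication in the quotient, but it is the one place where one must be careful, since without the identification with $(P^{2}+K)/K$ the contrapositive argument in step two fails. Everything else is straightforward bookkeeping with the quotient grading $(R/K)_{g}=(R_{g}+K)/K$.
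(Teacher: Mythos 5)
Your proposal is correct and follows essentially the same route as the paper's proof: lift $\overline{X},\overline{Y}$ to graded right ideals $X,Y\supseteq K$ of $R$, use $K\subseteq P$ and the identification $(P/K)^{2}=(P^{2}+K)/K$ to transfer the hypotheses to $XY\subseteq P$ and $XY\nsubseteq P^{2}$, and then apply the almost primeness of $P$. The paper states the same translation without isolating the identity $(P/K)^{2}=(P^{2}+K)/K$, which you rightly flag as the one point needing care.
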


\begin{proof} Suppose that $\overline{X}\hspace{0.1cm}\overline{Y}\subseteq \overline{P}=P/K$ and $\overline{X}\hspace{0.1cm}\overline{Y}\nsubseteq \overline{P}^{2}$ for graded right ideals $\overline{X}$ and $\overline{Y}$ of $R/K$. Assume that $\overline{X}=X/K$ and $\overline{Y}=Y/K$ for some graded right ideals $X\supseteq K$ and $Y\supseteq K$. Then $(XY+K)/K\subseteq P/K$ and $(XY+K)/K\nsubseteq(P^{2}+K)/K$ which implies that $XY\subseteq P$ and $XY\nsubseteq P^{2}$. So, either $X\subseteq P$ or $Y\subseteq P$, and hence $\overline{X}\subseteq \overline{P}$ or $\overline{Y}\subseteq \overline{P}$.
\end{proof}

\begin{rem} One can choose a graded ring $R$ and a graded ideal $K$ of $R$ that is not a graded almost prime right ideal of $R$. Indeed, $\overline{0}=K/K$ is a graded almost prime right ideal of $R/K$. So, the converse of Theorem \ref{Theorem 2.16} is not true in general.
\end{rem}

In Example \ref{2}, every graded right ideal of $R$ is graded almost prime. In the rest of our article, we consider some cases where all graded right ideals of a non-commutative graded ring are graded almost prime. The next result is a repercussion of Theorem \ref{Theorem 2.7}.

\begin{cor}\label{Corollary 3.2} Let $R$ be a graded ring such that $I^{2}=\{0\}$ for every graded right ideal $I$ of $R$. Then every graded right ideal of $R$ is graded almost prime if and only if every graded right ideal of $R$ is graded weakly prime.
\end{cor}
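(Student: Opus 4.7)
The plan is to reduce everything to a pointwise application of Theorem \ref{Theorem 2.7}. The hypothesis on $R$ asserts that $I^{2}=\{0\}$ for \emph{every} graded right ideal $I$ of $R$, so the standing assumption ``$I^{2}=\{0\}$'' that Theorem \ref{Theorem 2.7} requires is automatically satisfied for each choice of graded right ideal we might consider.

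First I would fix an arbitrary graded right ideal $I$ of $R$. Since $I^{2}=\{0\}$ by hypothesis, Theorem \ref{Theorem 2.7} applies and yields the biconditional
\[
I \text{ is graded weakly prime} \iff I \text{ is graded almost prime.}
\]
This is precisely the content of Theorem \ref{Theorem 2.7} specialised to this $I$, so no further argument is needed at the pointwise level.

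Next I would quantify over all graded right ideals. Because the equivalence above holds for each individual $I$, the universally quantified statements
\[
(\forall I)\, I \text{ is graded weakly prime} \qquad \text{and} \qquad (\forall I)\, I \text{ is graded almost prime}
\]
are equivalent as well: if every graded right ideal is graded almost prime, then by the pointwise equivalence every graded right ideal is graded weakly prime, and conversely. This gives exactly the conclusion of the corollary.

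There is no real obstacle here; the only subtlety worth noting in the write-up is to emphasise that the hypothesis of Corollary \ref{Corollary 3.2} is strictly stronger than that of Theorem \ref{Theorem 2.7} (it imposes $I^{2}=\{0\}$ on \emph{all} graded right ideals simultaneously), which is precisely what allows us to promote the pointwise equivalence to a uniform one. Accordingly, the proof can be written in one or two lines simply invoking Theorem \ref{Theorem 2.7}, which is consistent with the author's description of the result as ``a repercussion'' of that theorem.
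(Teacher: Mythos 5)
Your proposal is correct and matches the paper's own (one-line) proof, which simply cites Theorem \ref{Theorem 2.7}; your pointwise application of that theorem to each graded right ideal, followed by quantifying over all of them, is exactly the intended argument. No gaps.
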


The next result is a consequence of Corollary \ref{Corollary 2.8}.

\begin{cor}\label{Remark 3.3} Let $R$ be a graded ring such that $R^{2}=\{0\}$. Then every graded right ideal of $R$ is graded almost prime if and only if every graded right ideal of $R$ is graded weakly prime.
\end{cor}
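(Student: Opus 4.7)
The plan is to reduce the statement directly to Corollary \ref{Corollary 2.8} applied pointwise to each graded right ideal. The hypothesis $R^{2}=\{0\}$ forces a strong structural consequence: for any subset $I\subseteq R$ we have $I^{2}\subseteq R^{2}=\{0\}$, so in particular every graded right ideal $I$ of $R$ automatically satisfies $I^{2}=\{0\}$. This is the only observation that needs to be made before invoking the previous results.

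Once that observation is in place, I would proceed as follows. For the forward direction, assume every graded right ideal of $R$ is graded almost prime, and pick an arbitrary graded right ideal $I$ of $R$; Corollary \ref{Corollary 2.8} (or equivalently Theorem \ref{Theorem 2.7}, since $I^{2}=\{0\}$) says that $I$ is graded weakly prime iff it is graded almost prime, and by hypothesis the right-hand side holds, so $I$ is graded weakly prime. The reverse direction is symmetric: under the assumption that every graded right ideal is graded weakly prime, the same equivalence yields that every graded right ideal is graded almost prime.

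There is essentially no obstacle here, since the work has already been done in Theorem \ref{Theorem 2.7} and Corollary \ref{Corollary 2.8}; the statement is just the universally quantified form of Corollary \ref{Corollary 2.8} under the global hypothesis $R^{2}=\{0\}$. Accordingly, the proof can be written in one or two sentences, simply noting the inclusion $I^{2}\subseteq R^{2}=\{0\}$ and citing Corollary \ref{Corollary 2.8}. The only stylistic decision is whether to cite Theorem \ref{Theorem 2.7} directly (which already handles the ideal-by-ideal equivalence as soon as $I^{2}=\{0\}$) or to route through Corollary \ref{Corollary 2.8}, as the excerpt's lead-in suggests.
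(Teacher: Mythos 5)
Your proposal is correct and matches the paper's intent exactly: the paper states this corollary as an immediate consequence of Corollary \ref{Corollary 2.8} (which itself rests on Theorem \ref{Theorem 2.7}), and your observation that $I^{2}\subseteq R^{2}=\{0\}$ for every graded right ideal $I$ is precisely the one-line reduction needed. Nothing is missing.
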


\begin{thm}\label{Theorem 3.4} Let $f:R\rightarrow T$ be a graded ring epimorphism. If every graded right ideal of $R$ is graded almost prime, then so is $T$.
\end{thm}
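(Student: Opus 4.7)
The plan is to reduce the statement directly to Corollary \ref{Corollary 2.13}. Take an arbitrary graded right ideal $J$ of $T$ and set $I=f^{-1}(J)$. The task then splits into two small verifications followed by a one-line invocation of the corollary.

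First I would check that $I=f^{-1}(J)$ is a graded right ideal of $R$. That it is a right ideal is immediate from $f$ being a ring homomorphism. To see that it is graded, I would use that $f$ is a graded homomorphism, so $f(R_g)\subseteq T_g$ for every $g\in G$. Given $x\in I$ with homogeneous decomposition $x=\sum_{g\in G}x_g$, one has $f(x)=\sum_{g\in G}f(x_g)$ with $f(x_g)\in T_g$, and since $J$ is graded this forces $f(x_g)\in J$, i.e.\ $x_g\in f^{-1}(J)=I$ for every $g$. Hence $I$ is graded.

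Next, by the hypothesis that every graded right ideal of $R$ is graded almost prime, $I=f^{-1}(J)$ is a graded almost prime right ideal of $R$. Applying Corollary \ref{Corollary 2.13} (with this $J$ and $f$), we conclude that $J$ is a graded almost prime right ideal of $T$. Since $J$ was arbitrary, every graded right ideal of $T$ is graded almost prime.

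There is no real obstacle here; the only content beyond citing Corollary \ref{Corollary 2.13} is the routine verification that the preimage of a graded right ideal under a graded homomorphism is again graded. (The surjectivity of $f$ is not needed for this step, but it is required in the background use of Corollary \ref{Corollary 2.13}, which in turn relies on Theorem \ref{Theorem 2.12}.)
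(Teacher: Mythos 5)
Your proof is correct and follows essentially the same route as the paper: both pull back a graded right ideal of $T$ to $f^{-1}(J)$ in $R$ and invoke the pushforward result, the paper citing Theorem \ref{Theorem 2.12} directly where you cite its immediate consequence Corollary \ref{Corollary 2.13}. The only addition on your side is the (routine, and correctly executed) verification that $f^{-1}(J)$ is a graded right ideal, which the paper leaves implicit.
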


\begin{proof} Let $I$ be a graded right ideal of $T$. Then $f^{-1}(I)\supseteq Ker(f)$ is a graded almost prime right ideal of $R$, and then by Theorem \ref{Theorem 2.12} we get that $f(f^{-1}(I))=I$ is a graded almost prime right ideal of $T$.
\end{proof}

\begin{thm}\label{Theorem 3.5} Let $f:R\rightarrow T$ be a graded ring epimorphism such that $Ker(f)\subseteq P^{2}$ for any graded right ideal $P$ of $R$. If every graded right ideal of $T$ is graded almost prime, then so is $R$.
\end{thm}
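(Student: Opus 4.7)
The plan is to deduce this from Theorem \ref{Theorem 2.14} applied to each graded right ideal of $R$ individually. Let $P$ be an arbitrary graded right ideal of $R$; the goal is to show that $P$ is a graded almost prime right ideal of $R$.

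First, I would verify that $f(P)$ is a graded right ideal of $T$. The surjectivity of $f$ gives the right-ideal property, since any $t\in T$ equals $f(r)$ for some $r\in R$, so $f(P)\cdot t=f(P)f(r)=f(Pr)\subseteq f(P)$. The grading is preserved because $f(R_{g})\subseteq T_{g}$ for all $g\in G$: writing any $x\in P$ as $x=\sum_{g}x_{g}$ with $x_{g}\in P\cap R_{g}$ yields $f(x)=\sum_{g}f(x_{g})$ with $f(x_{g})\in T_{g}\cap f(P)$, so every element of $f(P)$ has all its homogeneous components in $f(P)$.

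Next, by the standing hypothesis that every graded right ideal of $T$ is graded almost prime, $f(P)$ is a graded almost prime right ideal of $T$. Meanwhile the assumption on the kernel gives $Ker(f)\subseteq P^{2}$. These are precisely the two hypotheses required by Theorem \ref{Theorem 2.14}, which then lets us conclude that $P$ is a graded almost prime right ideal of $R$. Since $P$ was arbitrary, every graded right ideal of $R$ is graded almost prime.

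There is essentially no serious obstacle here: the proof is a direct invocation of Theorem \ref{Theorem 2.14}, and the only minor check is the well-known fact that the image of a graded right ideal under a graded ring epimorphism is again a graded right ideal. The hypothesis $Ker(f)\subseteq P^{2}$ for \emph{every} graded right ideal $P$ is exactly what makes Theorem \ref{Theorem 2.14} applicable uniformly, so no additional argument is required.
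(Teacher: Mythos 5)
Your proposal is correct and is essentially identical to the paper's own proof: both apply Theorem \ref{Theorem 2.14} to $f(P)$ for an arbitrary graded right ideal $P$ of $R$, using the hypothesis $Ker(f)\subseteq P^{2}$ and the fact that $f(P)$ is a graded almost prime right ideal of $T$. Your added verification that the image of a graded right ideal under a graded epimorphism is again a graded right ideal is a harmless (and welcome) extra detail that the paper leaves implicit.
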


\begin{proof} Let $I$ be a graded right ideal of $R$. Then $f(I)$ is a graded almost prime right ideal of $T$, and then by Theorem \ref{Theorem 2.14} we get that $I$ is a graded almost prime right ideal of $R$.
\end{proof}

\begin{thm}\label{Theorem 3.6} Let $R$ be a graded ring and $P$ be a graded ideal of $R$. If every graded right ideal of $R$ is graded almost prime, then so is $R/P$.
\end{thm}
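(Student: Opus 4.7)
The plan is to invoke Theorem \ref{Theorem 3.4} applied to the canonical projection $\pi : R \to R/P$. Since $P$ is a graded ideal, $R/P$ becomes a $G$-graded ring via $(R/P)_{g} = (R_{g}+P)/P$, and under this grading $\pi(R_{g}) \subseteq (R/P)_{g}$, so $\pi$ is a graded ring epimorphism. By the hypothesis of our theorem, every graded right ideal of $R$ is graded almost prime, and Theorem \ref{Theorem 3.4} delivers at once that every graded right ideal of the codomain $R/P$ is graded almost prime.

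Alternatively, one can argue directly from Theorem \ref{Theorem 2.16}. The first step is to pick an arbitrary graded right ideal $\overline{I}$ of $R/P$ and use the standard correspondence to write $\overline{I} = I/P$, where $I$ is a graded right ideal of $R$ containing $P$. The second step is to invoke the hypothesis: $I$ is a graded almost prime right ideal of $R$. The third step is a direct application of Theorem \ref{Theorem 2.16} (with $K=P$), whose conclusion is precisely that $I/P$ is a graded almost prime right ideal of $R/P$.

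No real obstacle arises here; both routes are short and mechanical. The only point requiring a moment's care is the ideal correspondence for graded right ideals, namely that every graded right ideal of $R/P$ lifts to a graded right ideal of $R$ containing $P$, which is routine and has already been used silently in the proof of Theorem \ref{Theorem 2.16}. Given the availability of either Theorem \ref{Theorem 3.4} or Theorem \ref{Theorem 2.16}, the cleanest presentation is the one-line reduction via $\pi$.
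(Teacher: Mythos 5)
Your proposal is correct, and your second route (lift $\overline{I}$ to a graded right ideal $I\supseteq P$ of $R$, invoke the hypothesis, and apply Theorem \ref{Theorem 2.16} with $K=P$) is exactly the paper's own proof. Your first route is a genuine, and in fact shorter, alternative the paper does not use: since $\pi:R\to R/P$ is a graded ring epimorphism with $\ker\pi=P$, Theorem \ref{Theorem 3.4} applies verbatim and yields the statement in one line; this works because the proof of Theorem \ref{Theorem 3.4} only needs $\ker f\subseteq f^{-1}(I)$, which holds automatically. The only thing either route buys over the other is economy of citation --- Theorem \ref{Theorem 3.4} itself rests on Theorem \ref{Theorem 2.12}, whereas the paper's route rests on Theorem \ref{Theorem 2.16}, so the two arguments are independent rather than one subsuming the other.
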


\begin{proof} Suppose that $\overline{I}$ is a graded right ideal of $R/P$. Then there exists a graded right ideal $I\supseteq P$ of $R$ such that $\overline{I}= I/P$. Clearly, $I$ is a graded almost prime right ideal of $R$. Hence, by Theorem \ref{Theorem 2.16} $\overline{I}$ is a graded almost prime right ideal of $R/P$.
\end{proof}

\end{document}